\title{Reconstructing the degree sequence of a sparse graph from a partial deck}
\author{Carla Groenland\footnote{Utrecht University, Utrecht, The Netherlands, \url{c.e.groenland@uu.nl}.} \quad 
Tom Johnston\footnote{University of Oxford, Oxford, UK, \url{thomas.johnston@maths.ox.ac.uk}, \\\url{scott@maths.ox.ac.uk}, \url{jane.tan@maths.ox.ac.uk}. } \quad 
Andrey Kupavskii\footnote{G-SCOP, CNRS, University Grenoble-Alpes, France and Moscow Institute of Physics and Technology,
Russia, \url{kupavskii@yandex.ru}.} \quad
Kitty~Meeks\footnote{University of Glasgow, Glasgow, UK, \url{kitty.meeks@glasgow.ac.uk}.\\
C. Groenland is partially supported by the project CRACKNP that has received funding from the European Research Council under the EU Horizon 2020 research and innovation programme (grant agreement no. 853234).  A. Kupavskii is supported by the grant RSF 22-11-00131.
 K. Meeks is supported by a Personal Research Fellowship from the Royal Society of Edinburgh, funded by the Scottish Government.
A. Scott is supported by EPSRC grant EP/V007327/1.
} \quad
Alex Scott\protect\footnotemark[2] \quad
Jane Tan\protect\footnotemark[2]}
\date{\today}
\newtheorem{theorem}{Theorem}
\newtheorem{lemma}[theorem]{Lemma}
\newtheorem{corollary}[theorem]{Corollary}
\newtheorem{observation}[theorem]{Observation}
\newtheorem*{claim*}{Claim}
\newtheorem*{rep@theorem}{\rep@title}
\newcommand{\newreptheorem}[2]{
\newenvironment{rep#1}[1]{
 \def\rep@title{#2 \ref{##1}}
 \begin{rep@theorem}}
 {\end{rep@theorem}}}
\newtheorem{conjecture}[theorem]{Conjecture}
\newtheorem*{conjecture*}{Conjecture}
\theoremstyle{definition}                    
\theoremstyle{remark}   
\newtheorem*{remark*}{Remark}
\newcommand{\floor}[1]{\left\lfloor #1 \right\rfloor}
\newcommand{\N}{\mathbb{N}}
\newcommand{\eps}{\varepsilon}
\begin{document}
\maketitle

\begin{abstract}
The deck of a graph $G$ is the multiset of cards $\{G-v:v\in V(G)\}$. Myrvold (1992) showed that the degree sequence of a graph on $n\geq7$ vertices can be reconstructed from any deck missing one card.
We prove that the degree sequence of a graph with average degree $d$ can reconstructed from any deck missing $O(n/d^3)$ cards. In particular, in the case of graphs that can be embedded on a fixed surface (e.g. planar graphs), the degree sequence can be reconstructed even when a linear number of the cards are missing. 
\end{abstract}

\vspace{2mm}
\section{Introduction}

Throughout this paper, all graphs are finite and undirected with no loops or multiple edges. 
Given a graph $G$ and any vertex $v\in V(G)$, the \textit{card} $G-v$ is the subgraph of $G$ obtained by removing the vertex $v$ together with all edges incident to $v$. The \emph{deck} $\mathcal{D}(G)$ is the multiset of all unlabelled cards of $G$.

Kelly and Ulam \cite{K42,K57,U60} raised the natural question: is it possible for two non-isomorphic graphs to have the same deck?  
\begin{conjecture}[Reconstruction Conjecture]
For $n\geq 3$, two graphs $G$ and $H$ of order $n$ are isomorphic if and only if $\mathcal{D}(G)=\mathcal{D}(H)$.
\end{conjecture}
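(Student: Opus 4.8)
The forward implication is immediate: any isomorphism $\phi\colon G\to H$ restricts, for each vertex $v$, to an isomorphism $G-v\to H-\phi(v)$, so the correspondence $v\mapsto\phi(v)$ witnesses $\mathcal{D}(G)=\mathcal{D}(H)$ as multisets of unlabelled cards. All of the content lies in the converse, so the plan is to show that $\mathcal{D}(G)$ determines $G$ up to isomorphism.

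The first step I would take is to establish Kelly's counting lemma. For a graph $F$ with $|V(F)|=k<n$, write $s(F,G)$ for the number of subgraphs of $G$ isomorphic to $F$. Each such subgraph $S$ survives intact in exactly those cards $G-v$ with $v\notin V(S)$, of which there are $n-k$; summing the counts over all cards therefore gives
$$\sum_{v\in V(G)}s(F,G-v)=(n-k)\,s(F,G).$$
Since $\mathcal{D}(G)$ determines each card up to isomorphism it determines every summand on the left, and hence $s(F,G)$ for \emph{every} $F$ on fewer than $n$ vertices. By inclusion--exclusion this also recovers all induced-subgraph counts, and in particular the number of edges and, via the methods of the present paper, the degree sequence. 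The second, and decisive, step would be to upgrade this wealth of proper-subgraph statistics into a reconstruction of $G$ itself: concretely, I would aim to prove that no two non-isomorphic graphs on $n$ vertices can agree on $s(F,\cdot)$ for every $F$ with $k<n$. Several cases should yield to classical reductions — disconnected graphs are reconstructible because the deck fixes the multiset of components, and highly structured families (regular graphs, trees, graphs with a dominating vertex) can be pinned down directly from the recovered invariants — so one would hope to reduce to the connected case and then argue by induction on some structural parameter.

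The main obstacle is precisely this final identification, and I do not expect the counting method alone to overcome it. Kelly's lemma is silent on $s(G,G)=1$, the single count that would settle everything, for the structural reason that the averaging identity needs each card to omit at least one vertex of the target subgraph, whereas the full graph omits none. The same failure occurs for all \emph{spanning} quantities — Hamilton cycles, perfect matchings, spanning trees — which sit exactly at $k=n$; recovering even these requires genuinely more than the lemma (Tutte and Nash--Williams succeeded for particular ones through additional algebraic and counting arguments), and no technique is known that recovers enough information to force $G$ up to isomorphism in general. Thus the plan extracts every proper-subgraph statistic but cannot, by itself, distinguish a hypothetical pair of non-isomorphic graphs sharing all of them, and ruling out such a pair is equivalent to the conjecture. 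Bridging this gap is where any genuine proof must introduce a fundamentally new idea, and it is the reason the statement remains open.
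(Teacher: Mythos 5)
You have not proved the statement, and you correctly diagnose why: this is the Reconstruction Conjecture of Kelly and Ulam, which the paper states as an open problem and does not prove (indeed, the paper's entire contribution is partial progress on reconstructing specific invariants, not the conjecture itself). Your forward implication is fine, and your derivation of Kelly's lemma is correct: the identity $\sum_{v\in V(G)} s(F,G-v)=(n-k)\,s(F,G)$ for $|V(F)|=k<n$ does recover all proper-subgraph counts, hence the edge count and degree sequence, and classical arguments do handle disconnected graphs, trees, and regular graphs. But your final step --- showing that no two non-isomorphic $n$-vertex graphs agree on $s(F,\cdot)$ for all $F$ on fewer than $n$ vertices --- is not a reduction of the problem; it \emph{is} the problem, in an equivalent formulation. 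Your own closing paragraph concedes this, so the proposal should be read as an accurate map of the known territory rather than a proof attempt with a fixable gap: the averaging identity degenerates exactly at $k=n$, spanning statistics (Hamilton cycles, spanning trees) require separate ad hoc arguments in the style of Tutte, and no known technique bridges from all proper-subgraph counts to the isomorphism type of $G$. Since there is no proof in the paper to compare against, the only verdict available is that the statement remains open and your writeup, while internally sound as far as it goes, establishes only the easy direction plus reconstructibility of certain invariants --- consistent with, but far short of, the conjecture.
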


The Reconstruction Conjecture remains open, even for basic classes of graphs such as planar graphs and graphs of bounded maximum degree. There have been numerous surveys of partial results and related problems, for instance \cite{AFLM10, B91, BH77, N78}. For a more detailed introduction, we refer to \cite{LS16}.

One closely related problem is that of determining which graph parameters are \textit{reconstructible} in the sense that they are determined by the deck. For instance, given a full deck of cards, one can reconstruct the number of edges $m$ by summing over the number of edges present in all of the cards and dividing by $n-2$, where $n$ is the number of vertices. The degree sequence can then be easily deduced as well. Other reconstructible parameters include connectedness, planarity, the number of Hamiltonian cycles and the chromatic polynomial (all of these are discussed in \cite{LS16} together with further examples).

Some of these parameters can be reconstructed even when a subset of the deck is missing:
\begin{itemize}
    \item Bowler, Brown, Fenner and Myrvold~\cite{BBFM11} showed that any $\left\lfloor\frac n2\right\rfloor+2$ cards suffice to determine whether the graph is connected. 
    \item Groenland, Guggiari and Scott~\cite{GGS20} proved that the number of edges can be reconstructed from any $n-\frac1{20}\sqrt{n}$ cards. This improves on the work of Myrvold~\cite{M92} and the work of Brown and Fenner~\cite{BF18}, who proved the bounds $n-1$ and $n-2$ respectively.
    \item Myrvold~\cite{M88,M92} showed that the degree sequence is reconstructible from any $n-1$ cards.
\end{itemize}

The main contribution of the present paper is a proof that the degree sequence of graphs with bounded average degree can be reconstructed even with a linear number of missing cards.
\begin{theorem}
\label{thm:degree_partial}
Let $G$ have $n\geq 3$ vertices and average degree bounded above by some $d\in \N$. Then the degree sequence of $G$ can be reconstructed from its deck when at most $\frac{n}{10^4d^3}$ of the cards are missing. 
\end{theorem}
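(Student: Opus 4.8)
The plan is to reconstruct, for every integer $J\ge 0$, the number $N_{\ge J}$ of vertices of $G$ of degree at least $J$; since the degree sequence is equivalent to the sequence $(N_{\ge J})_J$, this suffices. Write $S\subseteq V(G)$ for the set of vertices whose cards are missing and $s=|S|\le n/(10^4d^3)$, noting that $s$ and $n$ are read off from the deck. For a present card $C_v=G-v$ and a threshold $J$, the number of vertices of degree at least $J$ in $C_v$ is $N_{\ge J}-\1[\deg_G(v)\ge J]-|N_G(v)\cap D_J|$, where $D_J$ is the set of degree-$J$ vertices: deleting $v$ removes $v$ itself if it qualified, and pushes each degree-$J$ neighbour below the threshold. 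Summing over all present cards gives a deck-computable number $P_{\ge J}$, and collecting the incidences between $V\setminus S$ and $D_J$ yields the key identity
\begin{equation*}
(n-s-1)N_{\ge J}=P_{\ge J}+Jn_J-N^S_{\ge J}-\beta_J,
\end{equation*}
where $n_J=N_{\ge J}-N_{\ge J+1}$, where $N^S_{\ge J}$ counts the missing vertices of degree at least $J$, and where $\beta_J$ is the number of edges joining $S$ to $D_J$.

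The point of this identity is that $P_{\ge J}$, $n$ and $s$ are known, and the only unknown on the right apart from $N_{\ge J}$ is the error term $Jn_J-N^S_{\ge J}-\beta_J$. So I would first estimate $N_{\ge J}$ by rounding $P_{\ge J}/(n-s-1)$ to the nearest integer: this returns the correct value whenever the error term has absolute value below $(n-s-1)/2$. Here the sparsity hypothesis does the work. Since $N^S_{\ge J}\le s$ is always negligible, and since $\sum_J Jn_J=\sum_v \deg_G(v)=2m\le nd$ while $\sum_J\beta_J=\sum_{v\in S}\deg_G(v)\le 2m\le nd$, the two remaining contributions are each large only rarely: by a Markov-type count the number of thresholds $J$ at which $Jn_J\ge (n-s-1)/3$, or at which $\beta_J\ge(n-s-1)/3$, is $O(d)$. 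Hence rounding recovers $N_{\ge J}$ for all but a set $\mathcal B$ of at most $O(d)$ values of $J$.

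It then remains to pin down $N_{\ge J}$ on the exceptional set $\mathcal B$. This is the step I expect to be the main obstacle, and where the cubic dependence on $d$ should enter. The exceptional thresholds are of two kinds: small degrees carrying very high multiplicity, and the few genuinely high-degree vertices (there are at most $2m/k\le nd/k$ vertices of degree $\ge k$), for which the identity is numerically unstable because $Jn_J$ is comparable to $(n-s-1)N_{\ge J}$. To resolve $\mathcal B$ I would combine three ingredients: (i) the monotonicity and integrality of $J\mapsto N_{\ge J}$, which sandwiches each unknown value between its known neighbours; (ii) an exact reconstruction of the number of edges, which via $2m=\sum_{J\ge 1}N_{\ge J}$ gives the global constraint $\sum_{J\in\mathcal B}N_{\ge J}=2m-\sum_{J\notin\mathcal B}N_{\ge J}$; and (iii) a bounded number of further deck-reconstructible invariants, namely low-order degree moments $\sum_v \binom{\deg_G(v)}{t}$, whose card-averages satisfy identities analogous to the one above. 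Since $|\mathcal B|=O(d)$, finitely many such invariants give a small Vandermonde-type linear system in the unknown multiplicities $\{n_J:J\in\mathcal B\}$, which can be solved exactly.

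The crux, and the place where I would expect to spend the most effort, is making (ii) and (iii) rigorous with a partial deck: each of these invariants is itself only reconstructible up to a boundary error controlled by $S$, and forcing all of them to be correct to within $1/2$ — simultaneously, and for moments up to order $O(d)$ where the values $\deg_G(v)^t$ can be as large as $d^t$ — is exactly what should force $s$ to be as small as $n/(10^4d^3)$. I would therefore run the error analysis of these auxiliary reconstructions in parallel with the main one, tracking the contribution of the $S$-boundary terms throughout, and only at the end verify that the stated bound on the number of missing cards keeps every rounding step valid.
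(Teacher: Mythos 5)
Your opening identity is correct --- it is just the cumulative (summed over $t\ge J$) version of the identity $\sum_{i=1}^n d_t(G_i)=(n-1-t)d_t(G)+(t+1)d_{t+1}(G)$ that the paper works with --- and the Markov-type count showing at most $O(d)$ bad thresholds is fine. The gap is in everything after that. First, the exceptional set $\mathcal B$ is not reconstructible from the deck: at a bad $J$ the rounding of $P_{\ge J}/(n-s-1)$ silently returns a wrong integer, and you give no mechanism by which the reconstructor knows which values to distrust. Estimating $n_J$ from a single low-degree card carries an additive error of roughly $d+2$, so $Jn_J$ can be certified small only when $J=O(n/d)$; in the high-degree regime the bad thresholds are invisible. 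Concretely, a sparse graph may contain one vertex of degree $0.6n$; that single vertex makes its own $J$ bad, the rounding there is off by one, and nothing in the deck flags it. Second, the patching step cannot work as described. Once the number of edges is known, the degrees of all present-card vertices are known exactly, so your problem is precisely to determine the multiset of the $s$ missing degrees; the moments $\sum_v\binom{\deg_G(v)}{t}$ are therefore not independently available --- reconstructing them is equivalent to the very problem you are solving --- and any Kelly-style counting reconstruction of them from a partial deck incurs a per-missing-card error on the order of $\binom{\Delta}{t}$, where the maximum degree $\Delta$ can be $\Theta(n)$ (note your own estimate ``$\deg_G(v)^t$ can be as large as $d^t$'' conflates bounded average degree with bounded maximum degree). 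That error is astronomically larger than the precision a Vandermonde system would need to pin down integer multiplicities. So the part you yourself flag as ``the crux'' is not a technicality to be checked at the end; it is the actual hard case of the theorem, and your plan does not close it.

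It is worth seeing how the paper gets past exactly this high-degree obstruction, because it uses two ideas your plan is missing. First, instead of reconstructing each threshold independently, it couples consecutive degrees and runs a chain: by sparsity there is a $t\in[\frac14 n,\frac34 n]$ with $d_{t-1}(G)=d_t(G)=d_{t+1}(G)=0$, and this absence is certifiable from the cards; anchoring there, one solves the identity downward for $d_{t-1}(G), d_{t-2}(G),\dots$ (dividing by $n-1-t\ge n/4$) and upward for $d_{t+1}(G), d_{t+2}(G),\dots$ (dividing by $t+1\ge n/4$). In both directions the divisor is at least $n/4$, so an estimate of $\sum_i d_t(G_i)$ accurate to $n/8$ rounds to the truth, exactness propagates along the chain, and no exceptional set ever arises --- the large-$J$ instability you face is cancelled because for large $t$ one solves for $d_{t+1}(G)$ rather than $d_t(G)$. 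Second, to estimate $\sum_{i=1}^n d_t(G_i)$ including the contributions of missing cards and of high-degree vertices, the paper uses the single card $G_1$ whose deleted vertex has degree at most $d+1$ to \emph{simulate} every problematic card: since $G_1-v_i=G_i-v_1$, one has $|d_t(G_i)-d_t(G_1-v_i)|\le d+1$, so each high-degree or absent card costs only $O(d)$ error rather than $O(\Delta)$. This simulation is precisely what eliminates the terms ($Jn_J$, $\beta_J$, $N^S_{\ge J}$) that your argument defers to $\mathcal B$. Without an ingredient of this kind, deferring them is fatal rather than convenient.
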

Since graphs that can be embedded on a fixed surface have bounded average degree, the following corollary is immediate.
\begin{corollary}
\label{cor:surfaces}
For any surface $S$, there is an $\varepsilon>0$ such that for any $n$-vertex graph $G$ embeddable on $S$, the degree sequence of $G$ can be reconstructed from any collection of at least $(1-\varepsilon)n$ cards.
\end{corollary}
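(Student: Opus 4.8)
The plan is to deduce the corollary directly from Theorem~\ref{thm:degree_partial}: I will show that every $n$-vertex simple graph embeddable on a fixed surface $S$ has average degree bounded above by a constant $d = d(S)$ depending only on $S$, and then take $\varepsilon := \frac{1}{10^4 d^3}$. Indeed, a collection of at least $(1-\varepsilon)n$ cards is precisely a deck missing at most $n - (1-\varepsilon)n = \varepsilon n = \frac{n}{10^4 d^3}$ cards, so (assuming $n \ge 3$, as the finitely many smaller cases are trivial) Theorem~\ref{thm:degree_partial} reconstructs the degree sequence.

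The one substantive step is the average-degree bound, which I would obtain from the generalized Euler formula. Write $\chi = \chi(S)$ for the Euler characteristic of $S$. For a connected simple graph $H$ with $n_H \ge 3$ vertices and $m_H$ edges embedded in $S$, every face of the embedding is bounded by a closed walk of length at least $3$, while each edge is traversed exactly twice in total over all such boundary walks; this gives $3f \le 2m_H$ for the number of faces $f$. Combining it with Euler's inequality $n_H - m_H + f \ge \chi$ (which holds for any, possibly non-cellular, embedding, with equality in the cellular case) yields the standard bound $m_H \le 3n_H - 3\chi$, and hence an average degree $\frac{2m_H}{n_H} \le 6 - \frac{6\chi}{n_H} \le 6 + 2|\chi|$, using $n_H \ge 3$.

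To pass to an arbitrary, possibly disconnected, graph $G$, I would note that the average degree of $G$ is a weighted average, over its connected components $H$, of the component densities $\frac{2m_H}{n_H}$: components with $n_H \ge 3$ contribute at most $6 + 2|\chi|$ by the above, while components with $n_H \le 2$ contribute at most $1$. A weighted average of quantities each at most $6 + 2|\chi|$ is again at most $6 + 2|\chi|$, so $G$ has average degree at most $d := \lceil 6 + 2|\chi(S)| \rceil \in \N$, a constant depending only on $S$. Setting $\varepsilon = \frac{1}{10^4 d^3} > 0$ and invoking Theorem~\ref{thm:degree_partial} then completes the argument.

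I do not anticipate a genuine obstacle: all of the difficulty is already packaged into Theorem~\ref{thm:degree_partial}, and what remains is the routine fact that fixed-surface graphs are sparse. The only points requiring mild care are applying the Euler bound to the (possibly non-cellular) induced embedding of each component, for which the inequality form $n_H - m_H + f \ge \chi$ suffices, and confirming via the weighted-average argument that disconnectedness and small components cannot push the average degree above the chosen constant $d$.
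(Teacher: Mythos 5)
Your proposal is correct and takes essentially the same route as the paper: the paper derives the corollary immediately from Theorem~\ref{thm:degree_partial} via the standard fact that graphs embeddable on a fixed surface have bounded average degree, which is exactly your argument, with $\varepsilon = \frac{1}{10^4 d^3}$. You simply spell out the details the paper leaves implicit (the Euler-formula bound $m_H \le 3n_H - 3\chi$, the non-cellular and disconnected cases), and these details are handled correctly.
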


In particular, our results hold for planar graphs. A graph class $\mathcal{C}$ is \emph{recognisable} if no graph $G\in \mathcal{C}$ has the same deck of cards as any graph $H\not \in \mathcal{C}$. Planar graphs are of particular interest as they were shown to be recognisable by Bilinksi, Kwan and Yu in ~\cite{bilinski2007reconstruction}, but are still not known to be reconstructible in general.  Partial results include the reconstructibility of outerplanar graphs~\cite{Giles74}, maximal planar graphs~\cite{Lauri81}, and certain 5-connected planar graphs~\cite{bilinski2007reconstruction}.

To reconstruct the degree sequence, we will first reconstruct the number of edges from the partial deck.

\begin{theorem}
\label{thm:edgecount}
Let $G$ be a graph on $n\geq 3$ vertices with average degree at most $d$ for some given $d \in \mathbb{N}$. Then the number of edges in $G$ can be reconstructed from any deck missing at most $\frac{n}{4d+6} - d - 5$ cards.
\end{theorem}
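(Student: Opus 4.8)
The plan is to recast the statement as a comparison between two graphs. Suppose $G$ and $H$ both have $n$ vertices and average degree at most $d$, and that their decks share a common sub-multiset of at least $n-k$ cards, where $k\le \frac{n}{4d+6}-d-5$; it suffices to prove $|E(G)|=|E(H)|$, since then the value returned from the partial deck is forced. I would fix a witness for the shared cards: sets $A\subseteq V(G)$ and $B\subseteq V(H)$ with $|A|=|B|=n-k$ and a bijection $\phi\colon A\to B$ such that $G-a\cong H-\phi(a)$ for every $a\in A$. Write $m_G=|E(G)|$, $m_H=|E(H)|$ and $t=m_G-m_H$; the goal is $t=0$.

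First I would extract the coarse consequence of the shared cards. Isomorphic cards have the same number of edges, so $m_G-\deg_G(a)=m_H-\deg_H(\phi(a))$, giving $\deg_G(a)=\deg_H(\phi(a))+t$ for every $a\in A$. Summing this identity and using $\sum_v\deg=2m$ on both sides yields the master equation
\[
(n-k-2)\,t=D_H-D_G,
\]
where $D_G=\sum_{a\notin A}\deg_G(a)$ and $D_H=\sum_{b\notin B}\deg_H(b)$ are the degree sums of the at most $k$ unmatched vertices. Taking $t\ge 0$ without loss of generality, the relation $\deg_G(a)\ge t$ (as $\deg_H\ge 0$) shows that every matched vertex of $G$ has degree at least $t$, so all but at most $k$ vertices of $G$ have degree $\ge t$; hence $2m_G\ge t(n-k)$, and $2m_G\le dn$ already gives the preliminary bound $t\le \frac{dn}{n-k}<d+1$, i.e. $t\le d$. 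A symmetric observation caps the degrees of the matched vertices of $H$ by $n-1-t$, so $H$ has at most $k$ vertices of degree $\ge n-t$.

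The substance is then to eliminate the remaining range $1\le t\le d$, and this is where I expect the real difficulty. The master equation forces $D_H\ge n-k-2$, so the $k$ unmatched vertices of $H$ carry average degree at least $(n-k-2)/k$, which is of order $4d$ in the stated range of $k$; intuitively such a concentration of degree is hard to reconcile with average degree $d$ while simultaneously matching $n-k$ whole cards. To make this quantitative I would pass from edge counts to the full degree sequences of the cards, which are also shared: for every $i$ the number of degree-$i$ vertices of $G-a$ equals that of $H-\phi(a)$. Summing over the shared cards and using the standard identity $\sum_a c_i(G-a)=(n-1-i)g_i+(i+1)g_{i+1}$ (where $g_i$ counts the degree-$i$ vertices of $G$ and $c_i$ counts them in a card) recovers each $g_i$, and likewise each $h_i$, up to an additive error controlled by the adjacencies between the $k$ missing vertices and the degree-$i$ classes. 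Since $G$ and $H$ have average degree at most $d$, these errors are of size $O(kd)$, and combining the recovered profiles with the shift relation $g_i\approx h_{i-t}$, with graphicality, and with the two extremal constraints above should pin down $t$: any $t\ge 1$ would demand more high-degree vertices than a graph of average degree $d$ can support once $k<\frac{n}{4d+6}-d-5$. The explicit constants $4d+6$ and $-d-5$ are what I expect to emerge from optimising these error estimates.

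The main obstacle is therefore the third step: controlling the $O(k)$-sized correction terms that arise because the adjacencies of the unmatched vertices are invisible in the partial deck, and converting the heuristic ``too much degree to be sparse'' into a clean contradiction uniformly over $1\le t\le d$. The first two steps are robust and essentially forced; the delicate part is the bookkeeping of degree classes near the extremes of the sequence, where the shift by $t$ interacts both with the truncation of degrees to $[0,n-1]$ and with the missing cards.
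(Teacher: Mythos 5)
Your setup and first two steps are sound: recasting reconstructibility as showing $t := m_G - m_H = 0$ for two graphs of average degree at most $d$ sharing $n-k$ cards, deriving $\deg_G(a) = \deg_H(\phi(a)) + t$ from the shared cards, obtaining the master equation $(n-k-2)t = D_H - D_G$, and concluding $|t| \leq d$ are all correct. But the entire substance of the theorem is your third step, and there you have only a heuristic whose key quantitative claim is false. The correction terms in your card-profile identity are controlled by the edges between the unmatched vertices and the degree classes, i.e.\ by $D_G$ and $D_H$, not by $O(kd)$: nothing prevents the unmatched vertices of $G$ from being its highest-degree vertices, so $D_G$ can be of order $dn$, and -- precisely in the regime $t \geq 1$ that you must exclude -- your own master equation forces $D_H \geq n-k-2$, so the corrections on the $H$ side are of order $n$. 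The error control thus collapses exactly where it is needed. Moreover, the heuristic ``any $t\geq 1$ demands more high-degree vertices than a sparse graph can support'' cannot close by counting alone: $k$ unmatched vertices of average degree roughly $(n-k-2)/k \approx 4d+6$ contribute only about $n$ to the degree sum of $H$, comfortably within the budget $dn$ allowed by average degree $d$. So no contradiction follows from the bookkeeping you describe, and the case $1 \leq t \leq d$ remains open in your argument.

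For comparison, the paper's proof is not a comparison of two hypothetical graphs at all; it is a direct reconstruction procedure, and its key idea is the one missing from your sketch: \emph{locate a single card whose deleted vertex has an exactly known degree}. Concretely, the cards are sorted so that $|E(G_1)| \geq \dots \geq |E(G_{n-k})|$, equivalently $d_G(v_1) \leq \dots \leq d_G(v_{n-k})$; the bound on $k$ together with Lemma~\ref{lem:recognition_d} and Observation~\ref{obs:easydegrees} guarantees $d_G(v_1) \leq d+1$, whence $d_{<t}(G_1) \in [d_{<t}(G)-1,\, d_{<t}(G)+d+1]$ for every $t$. Pigeonhole on the at least $n/2$ vertices of degree at most $2(d+1)$ yields a smallest $t \leq 2(d+1)$ with $d_t(G_1) \geq k+d+4$, and a sandwich argument (using that at most $k$ cards are missing) shows that the card in position $j+2$, where $j = d_{<t}(G_1)$, comes from a vertex of degree exactly $t$; then $|E(G)| = |E(G_{j+2})| + t$. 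This converts approximate information into an exact count without ever needing the order-$n$ error estimates on degree profiles that sink your third step.
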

The proof of Theorem~\ref{thm:edgecount} is contained in Section~\ref{sec:cliquerec}, in which we also give a more general result for counting cliques in $G$. Section~\ref{sec:degreerecon} is devoted to proving Theorem~\ref{thm:degree_partial}. We discuss the tightness of our results and conclude with some open problems in Section~\ref{sec:concl}.

\subsection{Notation}
Throughout, we shall let $G$ be the original graph on $n$ vertices to be reconstructed. We write $G_i = G - v_i$ and order the vertices as $v_1,\ldots, v_n$ such that $G_1, \dots, G_{n-k}$ are the cards that have been given from the deck $\mathcal{D}(G)$. 

For any graph $H$, let $d_t(H)$ be the number of vertices of degree $t$ in $H$. That is, 
\[d_t(H) = |\{v\in V(H): d_H(v) = t\}|,\] 
where $d_H(v)$ denotes the degree of $v$ in $H$.  Similarly, let 
\[d_{< t}(H) = |\{v\in V(H): d_H(v) < t\}|\]
be the number of vertices of degree less than $t$ in $H$. 

We use the notation $[n]=\{1,\dots,n\}$, and write $[a,b]$ to denote the set of integers between $a$ and $b$ inclusive.

\section{Edge and clique counts}
\label{sec:cliquerec}

In order to reconstruct the degree sequence, we first reconstruct the number of edges. We note the following simple fact. 
\begin{observation}\label{obs:easydegrees}
A graph with average degree bounded above by $d\in \N$ has at least $\frac{n}{2}$ vertices of degree at most $2d$ and at least $\frac{n}{d+1}$ vertices of degree at most $d$.
\end{observation}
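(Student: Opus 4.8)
The plan is to prove both bounds via elementary averaging arguments, using only the hypothesis that the degree sum is at most $dn$. Write $m$ for the number of edges, so that by the average degree assumption $\sum_{v\in V(G)} d_G(v) = 2m \le dn$. Everything will follow by comparing this global bound against the contribution of the high-degree vertices.

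For the first bound, I would argue by contradiction. Suppose that fewer than $\frac n2$ vertices have degree at most $2d$; then strictly more than $\frac n2$ vertices have degree exceeding $2d$. Summing degrees over just these high-degree vertices already gives more than $2d \cdot \frac n2 = dn$, contradicting $\sum_v d_G(v) \le dn$. Hence at least $\frac n2$ vertices have degree at most $2d$.

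For the second bound, let $k$ denote the number of vertices of degree at most $d$, so that each of the remaining $n-k$ vertices has degree at least $d+1$. Bounding the degree sum from below by the contribution of these vertices yields $dn \ge \sum_v d_G(v) \ge (n-k)(d+1)$. Rearranging gives $k(d+1) \ge n$, i.e.\ $k \ge \frac{n}{d+1}$, as required.

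There is no substantive obstacle here: both inequalities are immediate consequences of the pigeonhole/averaging principle applied to the degree sum. The only point requiring mild care is tracking strict versus non-strict inequalities, so that the threshold values $2d$ and $d$ fall on the correct side and the bounds $\frac n2$ and $\frac{n}{d+1}$ come out exactly as stated.
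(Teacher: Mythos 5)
Your proof is correct, and since the paper states this observation without proof, your elementary averaging argument (a contradiction via the degree sum for the first bound, and a direct counting bound for the second) is exactly the intended justification. Both inequalities check out, including the handling of strict versus non-strict inequalities.
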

Given enough cards, this will allow us to assume that the card with the most edges corresponds to a low-degree vertex, and therefore has small ``error''. Another important feature is that the property of having small average degree is recognisable from the partial deck in the following sense.

\begin{lemma}
\label{lem:recognition_d}
Let $G$ be a graph on $n\geq 8$ vertices with average degree $d^*$. From any deck of $G$ missing $k\leq \frac{n}{4}$ cards, we can reconstruct a quantity $\widetilde{d}$ that satisfies $0\leq \widetilde{d}-d^*<1$.
\end{lemma}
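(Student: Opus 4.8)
The plan is to estimate the number of edges $m$ of $G$ directly from the partial deck, and then take $\widetilde d$ to be the resulting overestimate of $d^* = 2m/n$. First note that $n$ and $k$ are themselves known from the deck: every card has exactly $n-1$ vertices, so $n$ is determined, and $k$ equals $n$ minus the number of given cards. For each given card $G_i$ I can count its number of edges $m_i$, and recall that $m_i = m - d_G(v_i)$ since deleting $v_i$ removes exactly $d_G(v_i)$ edges.

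The key identity comes from summing over the whole deck. Since $\sum_{i=1}^n m_i = \sum_{i=1}^n (m - d_G(v_i)) = nm - 2m = (n-2)m$, splitting into the given and missing cards gives
\[
(n-2)m = \sum_{i=1}^{n-k} m_i + \sum_{i=n-k+1}^{n}\big(m - d_G(v_i)\big) = S + km - D,
\]
where $S = \sum_{i=1}^{n-k} m_i$ is known and $D = \sum_{i=n-k+1}^{n} d_G(v_i)$ is the unknown total degree of the missing vertices. Rearranging yields $m = \frac{S - D}{\,n-2-k\,}$, which is well defined because $n-2-k \ge \tfrac{3n}{4} - 2 > 0$ when $k \le n/4$.

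I would then bound $D$ crudely: since each of the $k$ missing vertices has degree at most $n-1$ in $G$, we have $0 \le D \le k(n-1)$. Feeding these two extremes into the identity sandwiches $m$ in an interval whose upper endpoint is $S/(n-2-k)$, so I set
\[
\widetilde d = \frac{2S}{n(n-2-k)},
\]
which satisfies $\widetilde d \ge 2m/n = d^*$. It then remains only to control the error: $\widetilde d - d^* = \frac{2D}{n(n-2-k)} \le \frac{2k(n-1)}{n(n-2-k)}$, and substituting $k \le n/4$ bounds this by $\frac{2(n-1)}{3n-8}$, which is strictly less than $1$ precisely when $n > 6$. Hence $0 \le \widetilde d - d^* < 1$ for all $n \ge 8$, as required.

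I expect the only real content to be in choosing the right quantity to bound rather than in any calculation. The naive route, bounding each missing card's edge count by $m$, couples a tight upper bound on $m$ with a lossy lower bound and produces an error that grows with the average degree. The improvement is to isolate the missing degree sum $D$ instead: its trivial range $[0, k(n-1)]$ has width only $k(n-1)$, and after dividing by the $\Theta(n)$ denominator $n-2-k$ this collapses to an error of order $\tfrac{2}{3}$, uniformly in $d^*$. So the main (and fairly modest) obstacle is recognising that the whole-deck edge identity, once rearranged to isolate $m$ and with the missing cards accounted for through $D$ rather than through their edge counts, already gives a two-sided bound strong enough to pin $d^*$ down to within $1$.
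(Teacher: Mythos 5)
Your proposal is correct and takes essentially the same route as the paper: the paper simply cites \cite[Lemma 2.1]{GGS20} for exactly your estimator (the known edge counts summed and divided by $n-2-k$), with exactly your error bound $0 \leq \widetilde{m}-m \leq \frac{k(n-1)}{n-2-k}$, and then converts to average degree and bounds the error by $1$ for $k\leq \frac{n}{4}$ and $n\geq 8$ just as you do. The only difference is that you re-derive the cited lemma inline from the identity $\sum_{i=1}^n |E(G_i)| = (n-2)m$, making the argument self-contained.
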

\begin{proof}
  By \cite[Lemma 2.1]{GGS20}, we can calculate from the cards $G_1,\dots,G_{n-k}$ an estimate $\widetilde{m}$ for the number of edges $m$ that satisfies 
  \[
  0 \leq \widetilde{m}- m \leq \frac{k(n-1)}{n-2-k}.
  \]
  Since $k\leq \frac14n$, we find
  \[
  \frac{k(n-1)}{n-2-k}< \frac{n^2}{3n-8}\leq  \frac{n}2
  \]
  for $n\geq 8$. Hence $\widetilde{d}=\frac{2\widetilde{m}}n$ satisfies the claimed inequality.
\end{proof}

Lemma \ref{lem:recognition_d} allows us to assume that the average degree $d^*$ is known up to a potential error of 1. This approximation will be used to prove Theorem~\ref{thm:edgecount}, but once we have that result and enough cards for it to apply, the average degree can then be computed exactly. Thus, when we come to reconstructing the degree sequence we will be able to assume that $d^*$ is known.

\begin{proof}[Proof of Theorem~\ref{thm:edgecount}]
	Let $k$ be the number of missing cards, where $k\leq n/(4d+6)-d-5$ and let the partial deck of cards consist of $G_1, \dots, G_{n-k}$ with $G_i = G - v_i$.
	After possibly reordering the cards, we may assume that $|E(G_1)| \geq |E(G_2)| \geq \dotsb \geq |E(G_{n-k})|$ or, equivalently, $d_G(v_1) \leq \dotsb \leq d_G(v_{n-k})$.
	We may assume that $n\geq 2(d + 6)(2d+3) \geq 36$, else there would be no cards missing. So by Lemma~\ref{lem:recognition_d}, it follows that any graph $H$ with this partial deck has average degree at most $d+1$. Then Observation~\ref{obs:easydegrees} and the conditions on $k$ together imply that any such graph has at least $k+1$ vertices of degree at most $d+1$ and in particular the vertex $v_1$ has degree at most $d+1$. Therefore, the corresponding card $G_1$ must satisfy
	\begin{equation}
	\label{eq:dbounds}
	d_{<t}(G_1) \in [d_{<t}(G)-1, d_{<t}(G)+d+1]
	\end{equation}
	for each $t\in[0,n-1]$. The upper bound comes from the observation that $v_1$ has at most $d+1$ neighbours. Only these vertices can drop degree from $t$ to $t-1$ when $v_1$ is deleted, and hence be counted in $d_{<t}(G_1)$ but not in $d_{<t}(G)$. For the lower bound, since the degree of a vertex in $G_1$ is at most the degree of the corresponding vertex in $G$, the only possible loss comes from the possibility that the deleted vertex $v_1$ may have had degree less than $t$. Applying Observation~\ref{obs:easydegrees} first and then (\ref{eq:dbounds}), we find
\[\frac12{n} \leq \sum_{t=0}^{2(d+1)}d_{t}(G) \leq 1 + \sum_{t=0}^{2(d+1)} d_{t}(G_1).\]
	It follows that there must be some $t\in [0,2(d+1)]$ such that 
	\begin{equation}
	\label{eqn:g1bounds}
	d_t(G_1) \geq  \frac{1}{2d + 3}\left( \frac12 n - 1 \right) \geq k + d +4,
	\end{equation}
	where the last inequality holds by our assumptions on $k$. Let us choose $t$ to be the smallest integer satisfying (\ref{eqn:g1bounds}), noting that this is determined by $G_1$ and does not depend on any other information about $G$. Our next goal is to find a card corresponding to a vertex with degree exactly $t$. 
	
	Set $j := d_{< t}(G_1)$. We claim that $d_G(v_{j+2}) = t$. From (\ref{eq:dbounds}), we see that $d_G(v_{j + 2}) \geq t$ since $j + 2=d_{<t}(G_1)+2 > d_{< t}(G)$. Moreover, 
	\begin{align*}
	j + 2 &= d_{<t+1}(G_1)-d_t(G_1) + 2 \\
	&\leq d_{< t+1}(G)+d+1-(k+d+4)+2 <  d_{<t+1}(G)- k.
	\end{align*}
	by the bounds in (\ref{eq:dbounds}) and (\ref{eqn:g1bounds}). Since we are missing at most $k$ cards from our deck, the $(j+2)$nd card is certainly within the first $j+k+2$ cards in the whole deck. Hence, we also have the reverse inequality $d_G(v_{j+2}) \leq t$. This proves our claim that $d_G(v_{j+2}) = t$. Since we can compute $j$ and $t$, and we have the card $G_{j+2}$, the number of edges may now be reconstructed by the formula $|E(G)| = |E(G_{j+2})| + t$.
\end{proof}

The preceding proof extends easily to reconstructing clique counts from an incomplete deck by replacing $d_t(H)$ and $d_{<t}(H)$ with analogous notions in terms of ``clique degree". Namely, for each fixed $r \in \mathbb{N}$, let $c(v)$ be the number of $r$-cliques which contain the vertex $v$. Then let the number of vertices $v$ for which $c(v) = t$ (similarly, $c(v) < t$) be denoted by $c_t(H)$ ($c_{<t}(H)$). Any vertex of degree at most $2(d+1)$ is in at most $\binom{2(d+1)}{r-1}$ cliques, and therefore
\[ \frac{n}{2} \leq 1 + \sum_{t=0}^{2(d+1)} d_{t}(G_1) \leq 1 + \sum_{t=0}^{\binom{2(d+1)}{r-1}} c_{t}(G_1).
\]
By the pigeonhole principle, there is some $t\in \{0,\dots,2(d+1)\}$ such that 
\begin{equation*}
c_t(G_1) \geq  \frac{1}{\binom{2(d+1)}{r-1} + 1}\left( \frac{n}{2} - 1 \right).
\end{equation*}
Again, only the neighbours of $v_1$ may appear in fewer cliques in $G_1$ than in $G$, and so $c_{<t}(G_1) \in [c_{<t}(G)-1, c_{<t}(G)+d_G(v_1)]$. 
Since the number of $r$-cliques in $G_{j+2}$ is the number of $r$-cliques in $G$ minus $c(v_{j+2})$, it suffices to choose $k$ to guarantee $c(v_{j+2}) =t$ for $j = c_{< t}(G_1)$. We obtain the following result.
\begin{theorem}\label{thm:cliquecount}
Let $d,r\in \N$. For any graph $G$ on $n$ vertices with average degree at most $d$, the number of cliques of size $r$ in $G$ can be reconstructed from any deck missing at most $\left(1 + \binom{2(d+1)}{r-1}\right)^{-1}\left(\frac{n}{2} -1 \right) - d - 5$ cards.
\end{theorem}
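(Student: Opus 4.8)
The plan is to follow the proof of Theorem~\ref{thm:edgecount} almost verbatim, with the \emph{clique degree} $c(v)$ (the number of $r$-cliques through $v$) playing the role of the ordinary degree $d_G(v)$, and with $c_t,c_{<t}$ replacing $d_t,d_{<t}$. The first point is that both orderings of the given cards that I need are computable from the deck alone: ordering the cards $G_i$ by the number of edges they contain (decreasing) orders the vertices by ordinary degree (increasing), while ordering them by the number of $r$-cliques they contain (decreasing) orders the vertices by clique degree (increasing), since $G-v_i$ contains exactly $c(v_i)$ fewer $r$-cliques than $G$. I would fix $G_1$ to be a card with the fewest edges; exactly as in Theorem~\ref{thm:edgecount}, Lemma~\ref{lem:recognition_d} together with Observation~\ref{obs:easydegrees} and the bound on $k$ guarantee that $d_G(v_1)\le d+1$.

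For the key steps: first, since at least $n/2$ vertices have ordinary degree at most $2(d+1)$ and any such vertex lies in at most $\binom{2(d+1)}{r-1}$ $r$-cliques, at least $n/2$ vertices have clique degree at most $M:=\binom{2(d+1)}{r-1}$; removing $v_1$ lowers the count of such vertices by at most one, so $\sum_{t=0}^{M}c_t(G_1)\ge \tfrac n2-1$, and pigeonhole over the $M+1$ values of $t$ produces a smallest $t\in[0,M]$ with $c_t(G_1)\ge (\tfrac n2-1)/(M+1)\ge k+d+4$ by the hypothesis on $k$. Second, because only the (at most $d_G(v_1)\le d+1$) neighbours of $v_1$ can lie in an $r$-clique with $v_1$, deleting $v_1$ moves at most $d+1$ vertices into $c_{<t}$ and loses only $v_1$ itself, giving the analogue of~(\ref{eq:dbounds}), namely $c_{<t}(G_1)\in[c_{<t}(G)-1,\,c_{<t}(G)+d+1]$ for every $t$. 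Setting $j:=c_{<t}(G_1)$, I would then locate, in the clique-degree ordering of the deck, the $(j+2)$nd card, with vertex $w$, and argue that $c(w)=t$: the lower bound $c(w)\ge t$ follows from $j+2>c_{<t}(G)$, while the upper bound follows from
\[
j+2 = c_{<t+1}(G_1)-c_t(G_1)+2 \le c_{<t+1}(G)+(d+1)-(k+d+4)+2 < c_{<t+1}(G)-k,
\]
so that $w$ lies among the first $c_{<t+1}(G)$ vertices even after accounting for the $k$ missing cards. Since the number of $r$-cliques of $G_w$ is the number in $G$ minus $c(w)=t$, the count of $r$-cliques in $G$ is reconstructed as $(\text{number of $r$-cliques in } G_w)+t$.

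I expect the main obstacle to be reconciling the two roles that a single ordering plays when $r=2$. In Theorem~\ref{thm:edgecount} the ordinary degree and the clique degree coincide, so one list of cards simultaneously supplies a low-degree approximating card $G_1$ and the degree-ordered positions used to pin down $v_{j+2}$. For $r>2$ these come apart: the error in the analogue of~(\ref{eq:dbounds}) must be controlled by the \emph{ordinary} degree of $v_1$, and the budget $(1+M)^{-1}(\tfrac n2-1)-d-5$ is tuned precisely to the value $d+1$ (any bound growing like $(r-1)M$, as one would get from the clique-degree-minimal card, would be far too large), whereas the position argument must be run in the \emph{clique-degree} ordering. The step requiring care is therefore the displayed chain above: one must check that the quantity $j=c_{<t}(G_1)$, read off from the edge-minimal card $G_1$, still sandwiches the clique degree of the $(j+2)$nd card in the clique-degree ordering, which it does because the two bounds invoked are on $c_{<t}(G)$ and $c_{<t+1}(G)$ and hence are independent of which ordering is used. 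The remaining verifications---that at least $k+1$ vertices have ordinary degree at most $d+1$ so that $G_1$ exists, and that the pigeonhole threshold indeed exceeds $k+d+4$---are routine consequences of the hypothesis on $k$.
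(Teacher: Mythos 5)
Your proof is correct and follows essentially the same route as the paper's own argument: take $G_1$ to be the edge-maximal card (whose deleted vertex has ordinary degree at most $d+1$), run the pigeonhole over clique degrees $t\in[0,\binom{2(d+1)}{r-1}]$ to find $t$ with $c_t(G_1)\geq k+d+4$, use $c_{<t}(G_1)\in[c_{<t}(G)-1,c_{<t}(G)+d+1]$, and locate the $(j+2)$nd card in the clique-degree ordering. If anything, you make explicit a point the paper's sketch leaves implicit (that the error bound must be controlled by the \emph{ordinary} degree of $v_1$ while the positional argument runs in the \emph{clique-degree} ordering of the cards), and you correctly state the pigeonhole range as $\{0,\dots,\binom{2(d+1)}{r-1}\}$ where the paper's sketch misprints it as $\{0,\dots,2(d+1)\}$.
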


\section{Degree sequence reconstruction}
\label{sec:degreerecon}
Once we know the number of edges $m$ in a graph $G$, deducing its degree sequence from the complete deck is a simple matter of subtracting from $m$ the number of edges seen in each card. Losing one card $G_i$ does not pose a problem as the missing degree is given by $d_G(v_i) = 2m - \sum_{j\neq i} d_G(v_j)$. However, as soon as we are missing just two cards it is no longer known whether the degree sequence can still be reconstructed.

The main result in this section shows that for every graph $G$ on $n\geq 3$ vertices with average degree at most $d$ for some $d\in \N$, the degree sequence of $G$ can be reconstructed from its deck when at most $\frac{n}{10^4d^3}$ of the cards are missing. 
Since the degree sequence can be reconstructed if no cards are missing, we may assume that $n\geq 10^4d^3$, which implies that the number of missing cards is at most $\frac{n}{4d+6} - d - 5$. Applying Theorem~\ref{thm:edgecount} then allows us to reconstruct the number of edges in $G$. This means that we can determine $d_G(v_i)$ for all vertices corresponding to cards in our partial deck, as well as the average degree of $G$. 

The total number of occurrences of degree $t$ vertices across all of the cards is $\sum_{i=1}^n d_t(G_i)$. At the same time, each vertex $v$ of degree $t$ in $G$ still has degree $t$ in $n-(t+1)$ cards, namely all those $G_i$ for which $v_i \not\in N_G(v) \cup \{v\}$. A vertex $v$ of degree $t+1$ has degree $t$ in $G_i$ if and only if $v_i\in N_G(v)$. Hence  
\begin{align}
\sum_{i=1}^n d_t(G_i)=(n-1-t)d_t(G)+(t+1)d_{t+1}(G). \label{eq:deg_t_in_deck}
\end{align}
In order to guess $d_t(G)$ from $d_{t+1}(G)$ or vice versa, we first obtain a good estimate on the left-hand side of the equation above.
\begin{lemma}\label{lem:estimate_deg_t}
Suppose we know the number of edges of $G$ and that the average degree of $G$ is at most $d\in \N$. Moreover, assume that $k \leq \frac{n}{1100d^2}$ is the number of missing cards and $n\geq 10^4 d^3$ is the number of vertices. Then for every $t\in [0,n]$, we can reconstruct an estimate $\widetilde{s_t}$ from the given cards such that $|\widetilde{s_t}-\sum_{i=1}^n d_t(G_i)|<\frac{n}8$.
\end{lemma}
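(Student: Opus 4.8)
The plan is to estimate $\sum_{i=1}^n d_t(G_i)$ by treating the $k$ unseen cards as if each behaved like the average of the $n-k$ cards we have. Since we are given $|E(G)|$, we know $d_G(v_i)=|E(G)|-|E(G_i)|$ for every visible card, so the degree sequence of $G$ is determined apart from the $k$ missing vertices: writing $r_t$ for the number of \emph{visible} vertices of degree $t$, we have $d_t(G)=r_t+\mu_t$, where $\mu_t\ge 0$ is the (unknown) number of missing vertices of degree $t$ and $\sum_t\mu_t=k$. The estimate I would take is the rescaled visible sum
\[
\widetilde{s_t}=\frac{n}{n-k}\sum_{i=1}^{n-k}d_t(G_i),
\]
which is computable from the partial deck.

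For the error, write $\bar d_t=\tfrac1{n-k}\sum_{i\le n-k}d_t(G_i)$ for the visible average; then
\[
\widetilde{s_t}-\sum_{i=1}^n d_t(G_i)=\sum_{j=n-k+1}^n\bigl(\bar d_t-d_t(G_j)\bigr),
\]
so everything reduces to controlling how far a single card can deviate from $\bar d_t$. Deleting a vertex $v$ changes $d_t$ only through $v$ and through the neighbours of $v$ whose degree crosses $t$; precisely $d_t(G-v)-d_t(G)=a_t(v)-b_t(v)-\1[d_G(v)=t]$, where $a_t(v)$ and $b_t(v)$ count the neighbours of $v$ of degree $t+1$ and $t$ respectively. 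Feeding the identity~\eqref{eq:deg_t_in_deck} into $\bar d_t$, the error separates into a provably negligible part --- a $\tfrac{k}{n-k}$-weighted combination of $\sum_{i\le n-k}(a_t(v_i)-b_t(v_i))$ and $r_t$, which is of order $n/d$ because $k\le \tfrac{n}{1100d^2}$ --- plus a main term $\sum_{j}\bigl(b_t(v_j)-a_t(v_j)\bigr)$ summed over the missing vertices.

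To bound the main term I would split the missing vertices at a threshold $\Delta$ of order $d^2$. A missing vertex of degree at most $\Delta$ contributes at most $\Delta$ in absolute value, so all such vertices together contribute at most $k\Delta<\tfrac{n}{16}$; the choice $\Delta\asymp d^2$ is exactly what forces the hypothesis $n\ge 10^4 d^3$, since $\Delta$ must stay well below the scale $n/d$ that appears next. For large $t$ --- of order $n/d$ or more --- no correction is needed at all: from $d_t(G)\le 2|E(G)|/t\le dn/t$ we get $\sum_j d_t(G_j)\le 2kdn/t<\tfrac n8$, so the uncorrected visible sum is already accurate, and for such $t$ one may simply take $\widetilde{s_t}=\sum_{i\le n-k}d_t(G_i)$.

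\textbf{The main obstacle} is the contribution of the \emph{high-degree} missing vertices (degree greater than $\Delta$) at \emph{small} $t$. Here $a_t(v_j)-b_t(v_j)$ can be as large as $d_G(v_j)$ and can pile up at a single value of $t$: deleting the centre of a star makes $d_0$ of that one card equal to $n-1$, so no termwise bound can survive. The plan is to correct for these few vertices by hand. Because the total degree is at most $dn$ there are relatively few vertices of degree exceeding $\Delta$, and the contribution of the missing ones to the degree sum is known from $\sum_t t\,\mu_t=2|E(G)|-\sum_{i\le n-k}d_G(v_i)$; the real work is to recover enough of their neighbour-degree profiles $\{a_t(v_j)\}_t$ to estimate $\sum_{\text{high }j}(a_t(v_j)-b_t(v_j))$ within $o(n)$ for every $t$. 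I expect this to be the technical heart of the proof: the point to exploit is that a high-degree vertex survives in every visible card, so its adjacencies to the (stable) low-degree part of $G$ are essentially witnessed by the partial deck, and the slack in $n\ge 10^4 d^3$ is what should make these corrections affordable.
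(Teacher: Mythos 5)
Your setup is sound and you have correctly isolated where the difficulty lies, but the proposal stops exactly there: the contribution of high-degree missing vertices at small $t$ is left as a hope (``recover enough of their neighbour-degree profiles''), not a proof. This is a genuine gap, and it is not a minor one. A single missing card corresponding to the centre of a star already forces an error of order $n$ at $t=0$ in your rescaled estimator $\widetilde{s_t}=\frac{n}{n-k}\sum_{i\le n-k}d_t(G_i)$, so some correction is indispensable; and your sketch of how to obtain it --- tracking the adjacencies of an \emph{unseen} high-degree vertex through the visible cards --- is itself a hard reconstruction problem, because the cards are unlabelled and there is no way to follow ``the same'' vertex from card to card.

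The paper resolves this with a different decomposition that sidesteps visibility altogether. Order the visible cards so that $G_1$ has the most edges; by Observation~\ref{obs:easydegrees} and the bound on $k$, the deleted vertex $v_1$ satisfies $d_G(v_1)\le d$. Then for \emph{every} vertex $w$, missing or not, one has $|d_t(G-w)-d_t(G_1-w)|\le d+1$, because $G_1-w=G-v_1-w$ differs from $G-w$ only in the low-degree vertex $v_1$. So instead of rescaling the visible sum and correcting for the missing cards, the paper partitions $[n]$ into: $I_1$, the vertices of degree greater than $100d^2$ in $G_1$ (high-degree, visible or not), whose cards are estimated by $\sum_{i\in I_1}d_t(G_1-v_i)$ --- a quantity computable from the single unlabelled card $G_1$, with total error at most $(d+1)|I_1|\le (d+1)\frac{n}{100d}$ since the average degree bound caps $|I_1|$; $I_2$, the visible vertices of degree at most $100d^2$ in $G$, whose contribution is computed exactly (their degrees are known once $|E(G)|$ is); and a leftover $I_3$ of size at most $k+d$ (missing low-degree vertices plus at most $d$ borderline vertices adjacent to $v_1$), estimated crudely by $|I_3|\,d_t(G_1)$ with error $O(d^2)$ per vertex. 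Your star example is exactly what $I_1$ handles: the centre has high degree in $G_1$, so $d_t(G_1-\text{centre})$ is available from the card and lies within $d+1$ of $d_t(G-\text{centre})$. This single-card proxy is the missing idea; without it, or a substitute of comparable strength, your argument does not go through.
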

\begin{proof}
Fix any $t\in[0,n]$. We again label the given cards $G_1,\dots,G_{n-k}$ so that $|E(G_1)|\geq \dots \geq |E(G_{n-k})|$. Let the missing cards be $G_{n-k+1}, \ldots, G_{n}$ ordered arbitrarily. 

To estimate $\sum_{i=1}^n d_t(G_i)$, we partition $[n]$ into three sets $I_1$, $I_2$, and $I_3$ defined as follows:
\begin{align*}
I_1&= \{i \in [2,n]: d_{G_1}(v_i)>100 d^2\},\\
I_2&=\{i\in[n-k]: d_{G}(v_i)\le 100 d^2\},\\
I_3&=[n] - (I_1\cup I_2).
\end{align*}
We assume that the vertex numbering of $G_1$ is inherited from $G$ for the sake of the argument, but we do not exploit that these labels are present on our given card. In particular, we do not access the set $I_1$, only the multiset $\{d_{G_1}(v_i):i\in [2,n] \text{ with } d_{G_1}(v_i)>100d^2\}$.
Note that $I_1\cap I_2=\emptyset$ as $d_{G_1}(w)\leq d_G(w)$ for all $w\in V(G_1)$, so we can write $[n] = I_1 \sqcup I_2 \sqcup I_3$ as a disjoint union. Moreover, note that $1\in I_2$ by Observation \ref{obs:easydegrees}.

For each $j=1,2,3$, we estimate $\sum_{i\in I_j} d_t(G_i)$. Recall that we know the number of edges of $G$ and hence the degrees of $v_1,\dots,v_{n-k}$. This is enough to reconstruct the set $I_2$, and to read off $d_t(G_i)$ for each $v_i\in I_2$ by examining the relevant card. Therefore, we can determine $\sum_{i\in I_2} d_t(G_i)$ exactly.

We estimate $\sum_{i\in I_1} d_t(G_i)$ by $\sum_{i\in I_1}d_t(G_1-v_i)$, and we now bound the error of this estimation. The vertex $v_1$ has degree at most $d$ in $G$ by Observation \ref{obs:easydegrees}, and so, for each $i\in [2,n]$, the vertex $v_1$ has degree at most $d$ in the graph $G_i$. It follows that for all $i\in [2,n]$, 
\[
|d_t(G_i)-d_t(G_1-v_i)|=|d_t(G_i)-d_t(G_i-v_1)|\le d + 1.
\]
Since there are at most $\frac{n}{100d}$ vertices with degree greater than $100d^2$ in $G$ and hence also at most $\frac{n}{100d}$ such vertices in $G_1$, we find that
\begin{equation}
\sum_{i\in I_1} |d_t(G_i)-d_t(G_1-v_i)|\le (d+1)\cdot |I_1|\le (d+1)\cdot \frac n{100d} \leq\frac{2n}{100}. \label{eq:v1est}
\end{equation}

Finally, we can express $I_3$ as the union
\[
\{i\geq  n-k+1: d_{G}(v_i)\le 100 d^2\}\cup\{i>1: d_{G_1}(v_i)\le 100 d^2 \text{ and }d_G(v_i)>100 d^2\}.
\]
In this form, we see that all vertices $v_i$ with $i\in I_3$ have degree at most $100 d^2+1$ in $G$. The first set in the union has cardinality at most $k$ and the second has cardinality at most $d_G(v_1)\le d$ (since all such vertices must be adjacent to $v_1$). Thus $|I_3|\leq k+d$. Moreover, observe that $|d_t(G_i)-d_t(G_j)|\le d_G(v_i)+d_G(v_j)+1$. This implies that 
\begin{equation}
\sum_{i \in I_3}|d_t(G_i)-d_t(G_1)|\le (100d^2+d+2)|I_3|\le (100d^2+d+2)(k+d) \label{eq:v3est}
\end{equation}
so we can estimate $\sum_{i\in I_3} d_t(G_i)$ by $|I_3|d_t(G_1)$. Note that we can reconstruct $|I_3|=n-|I_1|-|I_2|$ from the cards.

We now estimate $\sum_{i=1}^n d_t(G_i)$ by
\[
\widetilde{s_t}= \sum_{i\in I_1}d_t(G_1-v_i)+\sum_{i\in I_2}d_t(G_i)+ |I_3|d_t(G_1), 
\]
which is reconstructible from our partial deck.

Using \eqref{eq:v1est} and \eqref{eq:v3est}, the margin of error $\left| \sum_{i=1}^n d_t(G_i) - \widetilde{s_t} \right|$ is then given by
\begin{align*}
&\quad\left| \sum_{i\in I_1} d_t(G_i) +\sum_{i\in I_3} d_t(G_i)- \sum_{i\in I_1}d_t(G_1-v_i)  - |I_3|d_t(G_1)  \right|\\
&\leq \sum_{i\in I_1} \left|d_t(G_i)- d_t(G_1-v_i) \right| + \sum_{i\in I_3}  \left| d_t(G_i) - d_t(G_1) \right|\\
&\leq \frac{2n}{100} + (100d^2+d+2)(k+d)
\end{align*}
and this is less than $\frac{n}{8}$ for $k \leq \frac n{1100d^2}$ and $n\geq 10^4 d^3$.
\end{proof}

We now deduce the proof of the main result.
\begin{proof}[Proof of Theorem \ref{thm:degree_partial}]
Following the discussion at the start of this section, we may assume that $n\geq 10^4d^3$, that we have already reconstructed the number of edges in $G$, and that we have therefore determined the best possible upper bound $d\in \N$ on the average degree. In particular, for every $t\in [0,n]$, Lemma~\ref{lem:estimate_deg_t} provides an estimate $\widetilde{s_t}$ for $\sum_{i=1}^n d_t(G_i)$ with $|\widetilde{s_t}-\sum_{i=1}^n d_t(G_i)|<n/8$.

Rewriting (\ref{eq:deg_t_in_deck}), we obtain 
\[
d_t(G)=\frac1{(n-1-t)}\left(\sum_{i=1}^n d_t(G_i)- (t+1)d_{t+1}(G)\right)
\]
and, estimating $\sum_{i=1}^n d_t(G_i)$ by $\widetilde{s_t}$, we obtain the following estimate for $d_t(G)$
\[\widetilde{d_t} = \frac1{(n-1-t)}\left(\widetilde{s_t}-(t+1)d_{t+1}(G)\right). \]
If $t+1\leq \frac{3n}4$, then $\frac{n}8\leq \frac12(n-1-t)$ and hence 
\begin{equation}\label{eq:roundable}
\left|d_t(G)- \widetilde{d_t} \right| = \frac1{(n-1-t)} \left| \sum_{i=1}^n d_t(G_i) - \widetilde{s_t} \right| <\frac12.
\end{equation}
If $d_{t+1}$ is known exactly, this means that we can reconstruct $d_t$ exactly by rounding $\frac1{(n-1-t)}\left(\widetilde{s_t}-(t+1)d_{t+1}(G)\right)$ to the nearest integer. A symmetric argument, obtained by solving (\ref{eq:deg_t_in_deck}) for $d_{t+1}(G)$ and using the same estimate $\widetilde{s_t}$, shows that we can also reconstruct $d_{t+1}$ given $d_t$ and the partial deck when $t\geq \frac14n$.

We now show that there is a $t \in \left[\frac14n,\frac34n\right]$ for which we can reconstruct that $d_t(G)=0$. Observe that if there are two cards in our partial deck with no vertices of degree $t$ or $t-1$, then $d_t(G) = 0$. Moreover, if there is a $t$ such that $d_{t-1}(G)$, $d_{t}(G)$ and  $d_{t+1}(G)$ are all 0, then no vertices of degree $t - 1$ or $t$ will appear on any card, and we can reconstruct that $d_t(G) = 0$. We claim that this is the case for some $t \in \left[\frac14n,\frac34n\right]$. Suppose for a contradiction that $d_{t-1}(G)+d_t(G)+d_{t+1}(G) \geq 1$ for all $t \in \left[\frac14n,\frac34n\right]$. 
Then 
\[
\sum_{t\in \left[\frac14n,\frac34n\right]}d_t(G) \geq \frac13 \sum_{t\in \left[\frac14n+1,\frac34n-1\right]}(d_{t-1}(G)+d_t(G)+d_{t+1}(G))\geq \frac13\cdot 1\cdot \left(\frac{n}2-4\right)
\] 
which implies that 
\[dn\geq \sum_{t\in \left[\frac14n,\frac34n\right]}t d_t(G) \geq \frac1{4\cdot 6}n(n-8).
\]
This contradicts the assumption that $n \geq 10^4 d^3$.

Fix a $t \in \left[\frac14n,\frac34n\right]$ such that there no vertices of degree $t$ or $t-1$ on any card as found above. Since $d_t(G)$ is known exactly (to be 0), we may now reconstruct the estimate for $d_{t-1}$ given in \eqref{eq:roundable} and round to determine $d_{t-1}$ exactly. This process allows us to iteratively reconstruct $d_{t-1}(G),\dots,d_0(G)$. Returning to $d_t(G)$, we can also `push' in the other direction using the symmetric estimate to determine $d_{t+1}(G),d_{t+2}(G),\dots,d_{n-1}(G)$ in order as well.
\end{proof}
\section{Conclusion}
\label{sec:concl}
We have shown that it is possible to reconstruct the degree sequence of planar graphs with a linear number of missing cards. This is tight up to a constant. For example, consider the graphs  
\[
G_1=K_{1,p+1}\sqcup K_{1,p+1}\sqcup K_{1,p-1} \text{ and }
G_2=K_{1,p+1}\sqcup K_{1,p}\sqcup K_{1,p}
\]
formed by the disjoint union of three stars. For both graphs, roughly two thirds of their cards are equal to $K_{1,p+1}\sqcup K_{1,p}\sqcup K_{1,p-1}$, and we might be unable to distinguish the two graphs even with nearly two thirds of the deck. Yet $G_1$ has two vertices of degree $p+1$, whereas $G_2$ has only one such vertex. These graphs do have the same number of edges, but we can find examples with a linear number of common cards and a different number of edges. For example, $K_{2,p} \sqcup K_{1,p}$ and $K_{2,p+1} \sqcup K_{1, p-1}$ share approximately half their cards yet have a different number of edges.

These examples can be generalised to graph classes with a larger (constant) average degree $d$ as well. 
Indeed, consider adding disjoint copies of the same $(3p+4)$-vertex graph $H$ to both $G_1$ and $G_2$. The resulting graphs will still have about $\frac23\times \frac12=\frac13$ of their cards in common, and we can create the desired average degree by choosing the density of $H$.

Let $cc(G,H)$ denote the number of cards that $G$ and $H$ have in common, and let $cc(n):=\max\{cc(G,H) : G,H \text{ distinct graphs on $n$ vertices}\}$.
The graph reconstruction conjecture states that $cc(n) \leq n -1$ for $n\geq 3$. The examples above are variations on constructions by Bowler, Brown and Fenner \cite{BBF10} which lead to a bound $cc(n) \geq (\frac23 + o(1))n$. The authors of~\cite{BBF10} conjecture that the bound is tight and also propose a characterisation of the extremal graphs. 
\begin{conjecture}[Bowler, Brown and Fenner \cite{BBF10}]
\label{conj:BBF}
For large enough $n$, every graph is determined, up to isomorphism, by any $2\floor{(n-1)/3}+1$ of its vertex-deleted subgraphs.
\end{conjecture}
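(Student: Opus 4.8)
The plan is to prove the equivalent bound $cc(n)\le 2\floor{(n-1)/3}$; that is, to show that any two non-isomorphic graphs $G,H$ on $n$ vertices share at most $2\floor{(n-1)/3}$ of their cards. I should stress at the outset that this strictly strengthens the Reconstruction Conjecture itself (the weaker bound $cc(n)\le n-1$ is already equivalent to reconstruction), so a complete proof is beyond current techniques; what follows is a program together with the point at which it stalls.

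First I would recast the problem as a matching problem: $cc(G,H)$ is the size of the maximum common sub-multiset of the decks $\mathcal D(G)$ and $\mathcal D(H)$, so each matched card is a pair $(u,w)$ with $G-u\cong H-w$, and fixing an isomorphism on it yields an almost-isomorphism between $G$ and $H$ that misses only one vertex on each side. The next step is to extract reconstructible invariants from the $\sim 2n/3$ matched cards. Using Kelly-type counting refined to partial decks, as in Lemma~\ref{lem:estimate_deg_t} and \cite{GGS20}, one can estimate the number of copies of every fixed bounded subgraph $F$ in both $G$ and $H$ with an additive error controlled by the number of missing cards, and the hope is that agreement on $2n/3$ cards forces these counts to coincide closely enough to conclude $G\cong H$.

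I would then try to exploit the structure of the conjectured extremal examples. Every construction attaining $cc(n)=(\tfrac23+o(1))n$, including the three-star graphs $G_1,G_2$ above, is disconnected with repeated or near-repeated components, and the factor $2/3$ arises because deleting a vertex from one of three interchangeable blocks produces identical cards. This suggests a two-part strategy: prove a strictly better bound, say $cc(G,H)\le(\tfrac12+o(1))n$, for \emph{connected} $G$ and $H$, where no such block symmetry is available; and then reduce the disconnected case to the connected one by analysing how cards split across components, showing that at most two of every three component-deletion classes can be matched without forcing an isomorphism. It is in this reduction that the constant $2/3$ would have to be recovered exactly so as to match the construction.

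The main obstacle I expect is quantitative rather than structural. At the threshold of $\sim n/3$ missing cards the error term in every partial-deck estimate is linear in $n$, so the subgraph counts above are pinned down only to within a linear additive error. For sparse graphs this is harmless --- it is precisely what Theorem~\ref{thm:degree_partial} exploits --- but for dense $G$ even the degree sequence is not known to be reconstructible from $2n/3$ cards, and establishing that would already be new. Hence the counting approach cannot by itself close the gap: the genuinely hard step is controlling the cross-matchings between cards deleted at different positions when the per-card error is of the same order as the quantities being compared. Overcoming this seems to require a global rigidity argument, showing that one near-isomorphism extends consistently across all matched pairs --- which is essentially the content of the Reconstruction Conjecture and remains open.
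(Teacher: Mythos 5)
The statement you were asked to prove is not a theorem of the paper at all: it is Conjecture~\ref{conj:BBF}, an open conjecture of Bowler, Brown and Fenner which the paper merely quotes. The paper contains no proof of it --- only the lower-bound constructions showing $cc(n)\ge\left(\frac23+o(1)\right)n$, and the remark that even deciding whether $cc(n)\le(1-o(1))n$ holds is open and that this much weaker bound would already imply the Reconstruction Conjecture in a strong form. Your proposal is correctly calibrated in this respect: you recognise that the target bound $cc(n)\le 2\floor{(n-1)/3}$ strengthens the Reconstruction Conjecture (which is equivalent to $cc(n)\le n-1$) and you state explicitly that your argument stalls. But what you have written is a research programme, not a proof, and it cannot be credited as one; there is also no proof in the paper against which to compare it.

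For the record, the two concrete gaps in your programme are these. First, your reduction to the connected case presupposes that extremal pairs for $cc$ are disconnected with interchangeable star-like blocks; this is a feature of the known constructions (those in the paper and in \cite{BBF10}), not a theorem, so the reduction rests on a classification of extremal graphs that is itself open --- indeed it is essentially the characterisation that Bowler, Brown and Fenner propose alongside their conjecture. Second, as you yourself observe, every partial-deck counting estimate (Kelly-type counting, or the machinery of Lemma~\ref{lem:estimate_deg_t} and \cite{GGS20}) carries an additive error that grows linearly in the number of missing cards; with roughly $n/3$ cards missing and $G$ dense, this error is of the same order as the quantities being compared, so the counting approach cannot even pin down the degree sequence --- which is precisely why Theorem~\ref{thm:degree_partial} is restricted to graphs of bounded average degree. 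Neither gap is a locally fixable flaw; each is, in essence, a relative of the open problem itself.
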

A good first step towards Conjecture \ref{conj:BBF} would be to determine whether $cc(n)\ge
(1-o(1))n$. A positive answer would disprove Conjecture \ref{conj:BBF}, whereas a negative answer would prove the Reconstruction Conjecture in a strong form. We remark that the answer to the equivalent question in the `small' cards set-up has been answered. 
Let $s(G)$ denote the smallest $\ell$ for which the multiset $\mathcal{D}_\ell(G)$ of $\ell$-vertex induced subgraphs of $G$ determines $G$, and let $s(n)=\max\{s(G):G \text{ graph on }n\text{ vertices}\}.$
N\'{y}dl \cite{Nydl92} proved that $s(n)\geq (1-o(1))n$ by constructing, for any $\eps>0$, two non-isomorphic graphs on $n$ vertices with the same set of $\ell$-vertex subgraphs for all $\ell\leq (1-\eps)n$.

Groenland, Guggiari and Scott \cite{GGS20} conjectured that the degree sequence of a graph can be reconstructed from a deck of cards with a constant number $k$ of missing cards (for $n$ sufficiently large). It follows from Theorem \ref{thm:degree_partial} that the conjecture holds for graphs where the average degree is at most $c_kn^{\frac{1}{3}}$ (for some $c_k$ depending only on $k$), but it is not yet known to hold for general graphs and we repeat it below.
\begin{conjecture}[Groenland, Guggiari and Scott \cite{GGS20}]
Fix $k\in\mathbb{N}$ and let $n$ be sufficiently large. For any graph $G$ on $n$ vertices, the degree sequence of $G$ is reconstructible from any $n-k$ cards.
\end{conjecture}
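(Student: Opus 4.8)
The plan is to attack the conjecture by building on Theorem~\ref{thm:degree_partial} and closing the gap it leaves at large average degree. First I would fix the number of edges: for fixed $k$ and $n$ large we have $k\le\tfrac1{20}\sqrt n$, so the edge-count result of Groenland, Guggiari and Scott applies to \emph{any} graph and reconstructs $m=|E(G)|$ exactly. Knowing $m$, each given card $G_i$ reveals the degree of its deleted vertex via $d_G(v_i)=m-|E(G_i)|$, so we know the multiset of degrees of all but the $k$ missing vertices, together with their sum (and hence the average degree $d^*$); the task reduces to pinning down the multiset of $k$ unknown missing degrees. Next I would use complementation symmetry: the partial deck of $\overline G$ is obtained by complementing each given card (with the same missing set), and $d_{\overline G}(v)=n-1-d_G(v)$, so reconstructing the degree sequence of $G$ is equivalent to reconstructing that of $\overline G$. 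Passing to $\overline G$ if necessary, we may assume $d^*\le(n-1)/2$. Finally, whenever $d^*\le c_k n^{1/3}$, Theorem~\ref{thm:degree_partial} applies directly.

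This leaves the genuinely new regime $c_k n^{1/3}<d^*\le(n-1)/2$, which neither $G$ nor $\overline G$ makes sparse. Here I would work from the deck identity \eqref{eq:deg_t_in_deck}, rewritten as $(n-1-t)d_t(G)+(t+1)d_{t+1}(G)=S_t+R_t$, where $S_t=\sum_{i\le n-k}d_t(G_i)$ is known exactly from the given cards and $R_t=\sum_{i>n-k}d_t(G_i)$ is the unknown contribution of the $k=O(1)$ missing cards. The whole difficulty is to estimate each $R_t$ to within error $o(n)$, after which the propagation-and-rounding argument of Theorem~\ref{thm:degree_partial} finishes. Inspecting Lemma~\ref{lem:estimate_deg_t}, the high-degree vertices are \emph{not} the obstruction: the set $I_1$ estimates each high-$G_1$-degree vertex (given or missing) by $d_t(G_1-v_i)$ with per-term error only $d+1$, since deleting the low-degree reference vertex $v_1$ perturbs each count by at most $d+1$, for a total of at most $\tfrac{2n}{100}$. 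The lost factor of $d^3$ comes solely from $I_3$, and within it from the at most $d$ neighbours of $v_1$ whose $G$-degree straddles the threshold $100d^2$: these are estimated crudely by $d_t(G_1)$ with error up to $100d^2$ apiece.

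The plan to break the $n^{1/3}$ barrier is therefore to handle these $O(d)$ straddling vertices accurately rather than crudely. For a \emph{given} straddler we already possess its card, so $d_t(G_i)$ is known exactly; a \emph{missing} straddler is itself high-degree and could in principle be caught by the same accurate device $d_t(G_1-v_i)$. Either way one avoids the $100d^2$ error, so that among the straddlers only a bounded-error contribution survives and the total error drops to roughly $\tfrac{2n}{100}+O(k\,d^2)$, which is $o(n)$ for all $d=o(\sqrt n)$. Combined with complementation this would cover every graph with $d^*=o(\sqrt n)$, already well beyond $n^{1/3}$, and I expect that iterating the idea with several low-degree reference vertices (there are at least $n/(d+1)$ vertices of degree at most $d$ to choose from) could be pushed further into the medium band.

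The main obstacle is an \emph{identification} problem invisible in the sparse regime. Cards are unlabelled, so although we know each given card and the degree it omits, we do not know the bijection between given cards and the vertices of the reference card $G_1$; in particular we cannot tell which given cards correspond to neighbours of $v_1$, and so cannot selectively read off the straddlers' exact counts, nor locate a missing straddler as a specific vertex of $G_1$ to apply $d_t(G_1-v_i)$. Equivalently, membership in $I_3$ depends on $d_{G_1}(v_i)$, a quantity we cannot access for a given card. Resolving this matching---say by fingerprinting given cards to vertices of $G_1$ through their degree profiles, or by cross-referencing several reference cards to reconstruct the local neighbourhood structure around a low-degree vertex---is, I believe, the crux, and is precisely where the medium-degree case $\sqrt n\lesssim d^*\le n/2$ remains open.
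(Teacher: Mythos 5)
This statement is a conjecture, stated in the paper as open (``it is not yet known to hold for general graphs''), so there is no proof in the paper to compare against --- and your proposal does not close it either; it is a research plan whose pivotal step you yourself concede is unresolved. The sound parts are real: for fixed $k$ and large $n$ the edge count is exactly reconstructible by the Groenland--Guggiari--Scott result since $k\leq \frac{1}{20}\sqrt{n}$, complementation (using $\overline{G}-v=\overline{G-v}$ and $d_{\overline{G}}(v)=n-1-d_G(v)$) legitimately reduces to $d^*\leq (n-1)/2$, and your diagnosis of Lemma~\ref{lem:estimate_deg_t} is accurate: the $O(d^3)$ loss comes from the at most $d$ neighbours of $v_1$ straddling the threshold (note these in fact have $d_G(v_i)=100d^2+1$ exactly, since deleting $v_1$ lowers a degree by at most one), each charged an $O(d^2)$ error via the crude proxy $d_t(G_1)$.

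The genuine gap is exactly the identification problem you name, and it is fatal to the claimed improvement, not merely an inconvenience. To treat a straddler or a high-degree missing vertex ``accurately'' you must either read off $d_t(G_i)$ from the \emph{specific} given card of a neighbour of $v_1$, or locate a missing vertex as a \emph{specific} vertex of $G_1$ to form $d_t(G_1-v_i)$; both require a card-to-vertex bijection that unlabelled cards do not provide. The paper's lemma is engineered precisely to avoid this: it only ever uses the anonymous multiset $\{d_t(G_1-v_i): d_{G_1}(v_i)>100d^2\}$, summed over \emph{all} high-degree vertices of $G_1$ whether their cards are given or missing, which is why it cannot single out straddlers and must pay the $O(d^2)$ price per straddler. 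Your claimed error bound $\frac{2n}{100}+O(kd^2)$, and hence the intermediate claim covering all $d^*=o(\sqrt{n})$, is therefore unproven --- it is conditional on resolving the matching, which your fingerprinting suggestion does not do (distinct vertices can have identical degree profiles, and near-regular graphs defeat any degree-based fingerprint). A further unaddressed point: even granting estimates for $\sum_{i=1}^n d_t(G_i)$ with error $o(n)$, the propagation-and-rounding argument of Theorem~\ref{thm:degree_partial} needs an anchor $t\in\left[\frac14 n,\frac34 n\right]$ with $d_t(G)=0$ certifiable from the partial deck, and the counting argument guaranteeing such a window (which needs roughly $d<n/24$) fails for genuinely dense graphs, so in the regime $d^*=\Theta(n)$ after complementation the endgame of the proof, not just the estimation lemma, breaks down. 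The conjecture remains open, and your write-up correctly identifies, but does not overcome, the crux.
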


\paragraph{Acknowledgements}
AS would like to thank Xingxing Yu for helpful discussions that led to us thinking about this project. We would like to thank the referees for their helpful comments.

\bibliographystyle{scott}
\bibliography{reconstruction}

\begin{thebibliography}{10}

\bibitem{AFLM10}
K.~J. Asciak, M.~A. Francalanza, J.~Lauri and W.~Myrvold.
\newblock A survey of some open questions in reconstruction numbers.
\newblock {\em Ars Combin.}, {\bfseries 97}:443--456, 2010.

\bibitem{bilinski2007reconstruction}
M.~Bilinski, Y.~S. Kwon and X.~Yu.
\newblock On the reconstruction of planar graphs.
\newblock {\em Journal of Combinatorial Theory, Series B}, {\bfseries
  97}(5):745--756, 2007.

\bibitem{B91}
J.~A. Bondy.
\newblock A graph reconstructor’s manual.
\newblock {\em Surveys in Combinatorics}, {\bfseries \textbf{166}}:221--252,
  1991.

\bibitem{BH77}
J.~A. Bondy and R.~L. Hemminger.
\newblock Graph reconstruction—a survey.
\newblock {\em J. Graph Theory}, {\bfseries \textbf{1}}(3):227--268, 1977.

\bibitem{BBF10}
A.~Bowler, P.~Brown and T.~Fenner.
\newblock Families of pairs of graphs with a large number of common cards.
\newblock {\em J. Graph Theory}, {\bfseries \textbf{63}}(2):146--163, 2010.

\bibitem{BBFM11}
A.~Bowler, P.~Brown, T.~Fenner and W.~Myrvold.
\newblock Recognizing connectedness from vertex-deleted subgraphs.
\newblock {\em J. Graph Theory}, {\bfseries \textbf{67}}(4):285--299, 2011.

\bibitem{BF18}
P.~Brown and T.~Fenner.
\newblock The size of a graph is reconstructible from any $n- 2$ cards.
\newblock {\em Discrete Math.}, {\bfseries \textbf{341}}(1):165--174, 2018.

\bibitem{Giles74}
W.~B. Giles.
\newblock The reconstruction of outerplanar graphs.
\newblock {\em J. Combin. Theory Ser. B}, {\bfseries \textbf{16}}(3):215--226,
  1974.

\bibitem{GGS20}
C.~Groenland, H.~Guggiari and A.~Scott.
\newblock Size reconstructibility of graphs.
\newblock {\em J. Graph Theory}, {\bfseries 96}(2):326--337, 2021.

\bibitem{K42}
P.~J. Kelly.
\newblock {\em On isometric transformations}.
\newblock PhD thesis, University of Wisconsin, 1942.

\bibitem{K57}
P.~J. Kelly.
\newblock A congruence theorem for trees.
\newblock {\em Pacific J. Math.}, {\bfseries \textbf{7}}(1):961--968, 1957.

\bibitem{Lauri81}
J.~Lauri.
\newblock The reconstruction of maximal planar graphs.
\newblock {\em J. Combin. Theory Ser. B}, {\bfseries \textbf{30}}(2):196--214,
  1981.

\bibitem{LS16}
J.~Lauri and R.~Scapellato.
\newblock {\em Topics in graph automorphisms and reconstruction}.
\newblock London Mathematical Society Lecture Note Series {\bf 432}. Cambridge
  University Press, 2nd edition, 2016.

\bibitem{M88}
W.~Myrvold.
\newblock {\em Ally and adversary reconstruction problems}.
\newblock PhD thesis, University of Waterloo, 1988.

\bibitem{M92}
W.~Myrvold.
\newblock The degree sequence is reconstructible from $n- 1$ cards.
\newblock {\em Discrete Math.}, {\bfseries \textbf{102}}(2):187--196, 1992.

\bibitem{Nydl92}
V.~N{\'y}dl.
\newblock Finite undirected graphs which are not reconstructible from their
  large cardinality subgraphs.
\newblock {\em Discrete Math.}, {\bfseries \textbf{108}}(1-3):373--377, 1992.

\bibitem{N78}
C.~{St}. J. A. Nash-Williams.
\newblock The reconstruction problem.
\newblock {\em {Selected Topics in Graph Theory}}, {\bfseries
  \textbf{1}}:205--236, 1978.

\bibitem{U60}
S.~M. Ulam.
\newblock {\em {A Collection of Mathematical Problems}}, volume~8.
\newblock Interscience Publishers, 1960.

\end{thebibliography}

\end{document}